\theoremstyle{remark}{
\newtheorem{Def}{{\rm Definition}}

}
\theoremstyle{plain}
{
\newtheorem{Cor}{Corollary}

\newtheorem{Thm}{Theorem}

}
\begin{document}
\title[Reeb graphs of real algebraic functions with nice trees]{Graphs with tree decompositions of small graphs and realizing them as the Reeb graphs of real algebraic functions}
\author{Naoki kitazawa}
\keywords{(Non-singular) real algebraic manifolds and real algebraic maps. Smooth maps. Morse(-Bott) functions. Graphs. Trees. Tree decompositions of graphs. Reeb graphs. \\
\indent {\it \textup{2020} Mathematics Subject Classification}: 05C05, 05C10, 14P05, 14P10, 14P25, 57R45, 58C05.}

\address{Osaka Central Advanced Mathematical Institute (OCAMI) \\
3-3-138 Sugimoto, Sumiyoshi-ku Osaka 558-8585
TEL: +81-6-6605-3103
}
\email{naokikitazawa.formath@gmail.com}
\urladdr{https://naokikitazawa.github.io/NaokiKitazawa.html}
\maketitle
\begin{abstract}
We have been interested in graphs and realizing them as Reeb graphs of explicit real algebraic functions.
The {\it Reeb graph} of a differentiable function is the quotient space of the manifold of the domain, regarded as the space consisting of all components of preimages of all single points. Reeb graphs have been fundamental and strong tools in geometry of manifolds since the birth of theory of Morse functions, in the former half of the 20th century. We can easily see that the Reeb graph of the natural height of the unit sphere whose dimension is at least $2$ is a graph with exactly one edge and two edges.

We are concerned with realizations of graphs decomposed into trees nicely, each vertex of which corresponds to a graph with exactly one edge and two vertices or a graph with exactly two edges homeomorphic to a circle.

\end{abstract}
\section{Introduction.}
\label{sec:1}
Since \cite{kitazawa2}, we have been interested in realizing graphs as the so-called Reeb graphs of real algebraic functions.
 
The {\it Reeb graph} of a differentiable function is a quotient space of the manifold of the domain and regarded as the space of all components of preimages of single points. Since the birth of the theory of so-called Morse functions. Reeb graphs have been fundamental and strong tools in geometry of manifolds. They have information of the manifolds roughly and nicely.

Realizing graphs as Reeb graphs of nice differentiable functions has been studied since \cite{sharko}. \cite{masumotosaeki} and \cite{michalak} follow this for example. The author has also contributed to this, considering not only graphs but also prescribed preimages of single points.

Our paper is also on construction of explicit real functions satisfying prescribed topological and combinatorial conditions.
Existence (and approximation) theory on real algebraic manifolds and maps is a branch of classical theory on real algebraic geometry, established mainly, by Nash and Tognoli \cite{nash, tognoli}. It is still developing. \cite{kollar} explains some related history. Systematic construction is another natural, difficult and important problem.

First, fundamental and important terminologies, notions, and notation on manifolds and maps between manifolds.

\subsection{Manifolds and maps in differentiable (smooth) situations and real algebraic ones.}
Hereafter, ${\mathbb{R}}^k$ denotes the $k$-dimensional Euclidean space, which is also the $k$-dimensional real vector space and the real affine space and a Riemannian manifold with the so-called standard Euclidean metric: for a point $x \in {\mathbb{R}}^k$, $x_j$ denotes the $j$-th component where $1 \leq j \leq k$ and for two points $x_1,x_2 \in {\mathbb{R}}^k$, $||x_1-x_2||:=\sqrt{{\Sigma}_{j=1}^k {(x_{1,j}-x_{2,j})}^2}$ denotes the distance of the two points under this metric. Let $||x||:=||x-0||$ with $0 \in {\mathbb{R}}^k$ denoting the origin. We define $D^k:=\{x \in {\mathbb{R}}^k \mid ||x|| \leq 1\}$, the $k$-dimensional unit disk, and $S^{k-1}:=\{x \in {\mathbb{R}}^k \mid ||x||=1\}$, the ($k-1$)-dimensional unit sphere.
We use ${\pi}_{m,n}:{\mathbb{R}}^m \rightarrow {\mathbb{R}}^n$ for the canonical projection, mapping $x=(x_1,x_2) \in {\mathbb{R}}^n \times {\mathbb{R}}^{m-n}={\mathbb{R}}^m$ to $x_1$ with $m>n \geq 1$. A real polynomial map $c:{\mathbb{R}}^m \rightarrow {\mathbb{R}}^n$ means a map each component $c_j:{\mathbb{R}}^m \rightarrow \mathbb{R}$ (the $j$-th component) of which is defined by a real polynomial with $m$ and $n$ being arbitrary positive integers. The canonical projection is a real polynomial map. 

For a differentiable manifold $X$, $T_x X$ denotes the tangent vector space of $X$ at $x \in X$. Given a differentiable map $c:X \rightarrow Y$ between the differentiable manifolds $X$ and $Y$, ${dc}_{x}:T_xX \rightarrow T_{c(x)} Y$ denotes the differential at $x$, which is also a linear map. A singular point $x \in X$ of $c$ is a point where the rank of the map ${dc}_x$ drops and $S(c)$ denotes the set of all singular points of $c$, the {\it singular set} of $c$. We consider smooth maps (maps of the class $C^{\infty}$) as differentiable maps unless otherwise stated. A {\it diffeomorphism} is a homeomorphism being smooth and having no singular point. Two smooth manifolds are {\it diffeomorphic} if a diffeomorphism between them exists.

A {\it Morse} function into a $1$-dimensional smooth manifold $C$ is a smooth function $c:X \rightarrow C$ which has no singular point on the boundary of the manifold and whose singular point $p$ is always of the form $c(x_1,\cdots x_m)={\Sigma}_{j=1}^{m-i(p)+1}{x_j}^2-{\Sigma}_{j=1}^{i(p)} {x_{m-i(p)+j}}^2$ for suitable local coordinates and a suitable integer $i(p)$: see \cite{milnor} and see also \cite{golubitskyguillemin}. A {\it Morse-Bott} function into $C$ is a smooth function at each singular point which is represented as the composition of a smooth function with no singular point with a Morse function: see \cite{bott}. 

We introduce real algebraic objects respecting existing sophisticated exposition presented in \cite{bochnakcosteroy, kollar} and our papers and preprints such as \cite{kitazawa2, kitazawa3, kitazawa4, kitazawa5, kitazawa6}.

A union of connected components of the zero set of a real polynomial map $c:{\mathbb{R}}^m \rightarrow {\mathbb{R}}^n$ is {\it non-singular} if the rank of $c$ does not drop at any point of $x \in c^{-1}(0) \subset {\mathbb{R}}^m$, where we respect the implicit function theorem.

The set represented as a union of connected components of the zero set of the map is a ({\it regular}) {real algebraic} manifold if it is non-empty and non-singular. The real affine space and the unit sphere are of simplest real algebraic manifolds.
Our real algebraic function (or generally, a map) means a function (resp. map) represented as the composition of the canonical embedding of a real algebraic manifold into the real affine space with either the identity map or the canonical projection to a lower dimensional real affine space.

The real affine space and the unit sphere with its canonical embedding to the real affine space and its composition with the canonical projection to a lower dimensional real affine space give simplest examples of these cases.
\subsection{Our main result.}
Before presenting our main result, Theorem \ref{thm:1}, we explain graphs shortly.
Our {\it graph} means a $j$-dimensional ($j=0,1$) connected and compact CW complex. This is also a finite CW complex. 
A {\it tree decomposition} of a graph which is 1-dimensional means a decomposition of graph into finitely many $1$-dimensional graphs satisfying certain connectivity distinct two graphs in which intersect in a single vertex-set if they intersect and which define a tree canonically. We consider the following small case: each of the finitely many graphs is a graph with exactly one edge and two vertices or a graph with exactly two edges and vertices which is homeomorphic to a circle. We call this a {\it simple cactus tree decomposition} or an {\it SCT decomposition}. A {\it tree} is a $1$-dimensional graph whose 1st Betti number is $0$ and characterized as a graph with an SCT decomposition consisting of only graphs with exactly one edge and two vertices (Corollary \ref{cor:1}).

The {\it Reeb graph} of a smooth function is a graph which is a quotient space of the manifold of the domain defined also as the space of all components of preimages of single points and whose vertex is a point corresponding to the components containing some singular points of the function.
Reeb graphs have been fundamental and strong tools in geometry of manifolds, using {\it Morse} functions (\cite{reeb}). 

We define related terminologies and notions rigorously, in the next section, again. We exhibit Theorem \ref{thm:1}. Here, we also refer to Theorem \ref{thm:3}, presented later. This is no problem.
\begin{Thm}
\label{thm:1}
Let $G_T$ be a tree.  
We have a real algebraic function represented as the composition of a real algebraic map as in Theorem \ref{thm:3} with the canonical projection ${\pi}_{2,1}$ satisfying the following conditions.
\begin{itemize}
\item The polynomials $f_j$ are of 1 or 2 degree and each $S_j$ is a circle of a fixed radius or a straight line in ${\mathbb{R}}^2$.
\item The function is a Morse-Bott function.
\item The Reeb graph is isomorphic to $G_T$.
\end{itemize}
\end{Thm}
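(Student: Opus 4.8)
The plan is to reduce the theorem to a finite assembly problem governed by the combinatorics of $G_T$, and then to feed the resulting planar data into Theorem \ref{thm:3}. First I would invoke Corollary \ref{cor:1}: since $G_T$ is a tree, it admits an SCT decomposition all of whose blocks are graphs with exactly one edge and two vertices, and this decomposition endows the edge set with a canonical tree structure that I will use to order the construction. Next I would fix a \emph{generic height} on $G_T$, i.e. a continuous function $h\colon G_T \to \mathbb{R}$ that is injective on the vertex set and strictly monotone on each closed edge. Such an $h$ exists (choose any injective real-valued function on the finitely many vertices and interpolate monotonically along edges), and with respect to it every leaf of $G_T$ is a strict local extremum while every vertex that is neither a source nor a sink of $h$ is a branching level. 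The target is to produce a real algebraic manifold $M$ and a real algebraic map $f\colon M \to \mathbb{R}^2$ as in Theorem \ref{thm:3} for which $\pi_{2,1}\circ f$ realizes exactly the pair $(G_T,h)$ as its Reeb function, the $x_1$-coordinate playing the role of the Reeb value.

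Second, I would build the planar singular value set $\bigcup_j S_j \subset \mathbb{R}^2$, where $S_j=\{f_j=0\}$, block by block along the SCT tree. Each edge-block becomes a \emph{tube}: a region bounded by straight-line fold curves $S_j$ with $f_j$ of degree $1$, over which $f$ is a trivial fold so that each interior vertical line $\{x_1=c\}$ meets the preimage in a single connected component, contributing one open edge to the Reeb graph. Each leaf becomes a \emph{cap} modeled on the canonical projection of a round sphere to the unit disk: the boundary is a circle $S_j$ of a fixed radius with $f_j$ of degree $2$, and $\pi_{2,1}\circ f$ has there a single nondegenerate local extremum, contributing a degree-$1$ vertex. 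At an interior vertex of $G_T$ of degree $d$ I would glue the $d$ incident tubes along a single critical level, realized by an arrangement of the lines and circles meeting at one $x_1$-value; the point is that the Morse--Bott hypothesis lets me keep this a \emph{single} critical level rather than resolving it into several trivalent saddles, so the associated Reeb point has degree exactly $d$ and the Reeb graph comes out isomorphic, not merely homeomorphic, to $G_T$. Once the finite arrangement of lines and circles is fixed, Theorem \ref{thm:3} supplies the genuine real algebraic $M$ and $f$ with this singular value set and the prescribed local preimage structure.

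Third, I would verify the two analytic conclusions. For the Morse--Bott property, the singular points of $\pi_{2,1}\circ f$ occur exactly where the vertical foliation $\{x_1=c\}$ is tangent to some $S_j$ or at the glued branch levels; over a cap one reads off the standard nondegenerate quadratic model of a local extremum, and over a branch level one has a nondegenerate Hessian in the directions normal to the critical submanifold, which I would check directly from the low-degree local equations. For the Reeb graph, I would track the connected components of $(\pi_{2,1}\circ f)^{-1}(c)=f^{-1}(\{x_1=c\})$ as $c$ increases: a component is born or dies at each cap, persists unchanged across each tube, and the components incident to a branch level reorganize according to the local degree. Matching births, deaths and reorganizations to the extrema and branchings of $(G_T,h)$ then yields a graph isomorphism onto $G_T$ that respects the vertex set.

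The main obstacle I anticipate is the global and branch-level bookkeeping rather than any single local model. Concretely, the hard part is to lay out all the lines and circles of the fixed radius in one plane so that, first, the only tangencies with vertical lines and the only mutual incidences are the intended ones, since any stray crossing or tangency would create an extra Reeb vertex and break the isomorphism, and second, at each vertex of degree $d\ge 3$ the $d$ tubes can be merged at a common $x_1$-value by a configuration whose defining polynomials stay of degree at most $2$ while the merged critical set remains Morse--Bott and yields a single degree-$d$ Reeb point. Controlling this global consistency, together with ensuring that the assembled pieces form a single connected real algebraic manifold with the height values globally compatible along the SCT tree, is where the real work lies; the algebraicity itself is then delivered by Theorem \ref{thm:3}.
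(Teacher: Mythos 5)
Your overall strategy coincides with the paper's: embed $G_T$ in ${\mathbb{R}}^2$ over a height function, build a planar region $\overline{D}$ bounded by straight lines and circles of fixed radii whose vertical slices reproduce the component structure of the level sets of that height, and let Theorem \ref{thm:3} supply the real algebraic manifold and map. But your proposal stops exactly where the paper's proof begins: you name the global arrangement of the lines and circles as ``where the real work lies'' and do not carry it out, and you miss the one idea that makes it tractable. The paper does not use a generic height. It chooses $g_{G_T}$ so that the sizes of the preimages ${g_{G_T}}^{-1}(p_a)$ over successive intervals between critical values are \emph{non-decreasing}, starting from $1$: the tree is rooted at a leaf and fanned out monotonically to the right, so that level sets only ever split and never merge or die in the interior of the picture. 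With this normalization, $D$ can be taken to be a single rectangle bounded by four lines $S_1,\dots,S_4$, every branching is realized by disjoint fixed-radius circles tangent from the right to the vertical line at the branch level, and the terminal leaves are all collected on the right-hand edge $S_2$, split into the correct number of arcs by the circles $S_{4+j}$ centered on $S_2$. Under your generic $h$ you would additionally have to cap off tubes at interior local maxima and merge several tubes into one at interior vertices where edges descend together, using only lines and fixed-radius circles subject to the transversality and no-triple-intersection constraints of Theorem \ref{thm:3}; none of that is addressed, and none of it is needed once the monotone height is chosen. Your picture of one tube per edge bounded by its own lines also differs from the paper's single rectangle with circular indentations, and it multiplies the incidence problems you worry about.

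A second concrete gap concerns the vertex set. An isomorphism of graphs must carry the vertex set onto the vertex set, and a degree-$2$ vertex of $G_T$ through which your $h$ passes monotonically produces no change in level-set topology, hence no forced singular fiber and no vertex of the Reeb graph: your construction would at best give a graph homeomorphic, not isomorphic, to $G_T$. The paper handles this explicitly by inserting, for each degree-$2$ vertex, an additional circle meeting the boundary of the corresponding region at that level, creating a Morse--Bott critical submanifold in that fiber without splitting it. Relatedly, not only vertical tangencies but also the transversal crossings in $S_{j_1}\cap S_{j_2}\cap\overline{D}$ produce singular points of ${\pi}_{2,1}\circ f$ (locally the composed function is a sum of two fold terms), so these crossings must themselves be placed at the intended levels $i_{g_{G_T},a}$; you mention stray crossings only as a hazard, whereas the paper places them deliberately as part of the vertex-creating data.
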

Our related results on realizing graphs as the Reeb graphs of real algebraic functions are also on \cite{kitazawa3, kitazawa5, kitazawa6}. Compare this to them. 
\subsection{The organization of our paper.}
In the next section, we explain graphs, tree decompositions of graphs, and Reeb graphs, more precisely. We also prove Theorem \ref{thm:1}. We also present another result (Theorem \ref{thm:4}).

\section{On our main result.} 
We first explain terminologies, notions and notation we need, more precisely.
\subsection{Graphs, tree decompositions of graphs, and Reeb graphs.}
Our {\it graph} is a $0$-dimensional or $1$-dimensional CW complex which is finite and connected and whose underlying space is compact. An {\it edge} (a {\it vertex}) of a graph means a $1$-cell (resp. $0$-cell) of it. We also assume that our graph does not have an edge whose closure is homeomorphic to $S^1$. A {\it multigraph}, a graph which may have a pair of vertices connected by two distinct (closures of) edges, satisfies the definition of our graph.
The {\it edge} ({\it vertex}) {\it set} of a graph is the set of all edges (resp. vertices) of the graph.
 An {\it isomorphism} between two graphs is a piecewise smooth homeomorphism mapping the vertex set of a graph onto that of the other graph. Two graphs are {\it isomorphic} if an isomorphism between the graphs exists.

A {\it subgraph $G^{\prime}$} of a graph means a subcomplex of $G$ and this is also regarded as a graph.

A {\it tree} is a $1$-dimensional graph whose 1st Betti number is $0$.

\begin{Def}
\label{def:1}
A {\it tree decomposition} of a $1$-dimensional graph $G$ is a pair $(\{G_j\}_{j \in J},T_{\{G_j\}_{j \in J}})$ of a family $\{G_j\}_{j \in J}$ of 1-dimensional subgraphs of $G$ satisfying $G={\bigcup}_{j} G_j$ and a canonically defined graph $T_{\{G_j\}_{j \in J}}$ which is tree.
\begin{itemize}
\item By removing an edge of a subgraph $G_j$ of $G$, we have another graph. A graph we can obtain by this procedure is always connected. 
\item Each $G_j$ is maximal in the following sense.
If a graph ${G_{j}}^{\prime}$ is a subgraph of $G$ containing $G_j$ as a subgraph and if by removing an arbitrary edge of ${G_j}^{\prime}$ we always have a connected graph, then ${G_{j}}^{\prime}=G_j$.
\item Two distinct subgraphs $G_{i_1}$ and $G_{i_2}$ of $G$ are disjoint or intersect in a one-point set consisting of exactly one vertex of $G$. 
\item The graph $T_{\{G_j\}_{j \in J}}$ is defined as follows.
\begin{itemize}
\item The family $\{G_j\}_{j \in J}$ of subgraphs of $G$ is the vertex set of $T_{\{G_j\}_{j \in J}}$.
\item The distinct vertices $G_{i_1}$ and $G_{i_2}$, or equivalently, distinct subgraphs $G_{i_1}$ and $G_{i_2}$ of $G$ are connected by (the closure of) exactly one edge of $T_{\{G_j\}_{j \in J}}$ if the graphs $G_{i_1}$ and $G_{i_2}$ 
are not disjoint as subgraphs of $G$. The distinct vertices $G_{i_1}$ and $G_{i_2}$ are not connected by (the closure of) any edge of $T_{\{G_j\}_{j \in J}}$ if the graphs $G_{i_1}$ and $G_{i_2}$ are disjoint as subgraphs of $G$. 
\end{itemize}
\end{itemize}
\end{Def}
\begin{Thm}
\label{thm:2}
For any $1$-dimensional graph $G$, we have a tree decomposition uniquely. It is no problem that we use the notation $T_G:=T_{\{G_j\}_{j \in J}}$.
\end{Thm}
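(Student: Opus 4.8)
The plan is to produce the decomposition by a canonical construction on the edge set of $G$, and then to read off uniqueness from the fact that the conditions of Definition \ref{def:1} force exactly this construction. First I would define an equivalence relation on the edge set of $G$: two edges are equivalent if they coincide or lie together on (the image of) a circle in $G$. Reflexivity and symmetry are clear, and transitivity is the only nontrivial point; it follows by splicing two such circles along the arc through their common edge to obtain a single circle through the two outer edges. Taking the union of each equivalence class together with the vertices incident to its edges produces the candidate subgraphs $G_j$: a class containing a circle yields a subgraph in which every edge lies on a circle, so deleting any one edge leaves it connected, matching the first item of Definition \ref{def:1}; a class that is a single edge (a bridge of $G$) yields one of the remaining one-edge pieces. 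Since every edge lies in exactly one class, $G=\bigcup_j G_j$.

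Next I would verify the remaining items of Definition \ref{def:1} and extract uniqueness. Maximality is immediate: if a subgraph $G_j'\supsetneq G_j$ still had the property that deleting any edge keeps it connected, then every edge of $G_j'$ would lie on a circle and hence be equivalent to the edges of $G_j$, so $G_j'$ could contain no new edge and $G_j'=G_j$. For the intersection condition I would argue that two distinct pieces cannot share an edge (they would then lie in the same class) and cannot share two vertices (joining them through a path between those vertices would exhibit a larger subgraph in which every edge lies on a circle, contradicting maximality); hence distinct pieces are disjoint or meet in a single vertex, which is then a cut vertex of $G$. Because the whole family $\{G_j\}$ is pinned down by the canonical edge equivalence together with these forced conditions, any decomposition as in Definition \ref{def:1} must consist of exactly these subgraphs, giving uniqueness of $\{G_j\}_{j\in J}$, and hence of $T_{\{G_j\}_{j\in J}}$.

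The remaining step, which I expect to be the main obstacle, is to show that the associated graph $T_{\{G_j\}_{j\in J}}$ is genuinely a tree, i.e.\ connected with first Betti number $0$. Connectivity is inherited from $G$, since a piecewise smooth path in $G$ joining points of two pieces crosses a finite chain of pieces meeting successively in shared vertices. For the vanishing of the first Betti number I would not argue directly on the intersection pattern of the pieces, since several pieces can share one common vertex and one must be careful that this produces no cycle; instead I would pass to the graph $\hat{G}$ obtained from $G$ by collapsing each piece that contains a circle to a point. The edges of $\hat{G}$ are exactly the bridge pieces, $\hat{G}$ is connected because $G$ is, and $\hat{G}$ has no circle, since a circle in $\hat{G}$ would lift to a circle in $G$ assembled from bridges, contradicting that a bridge lies on no circle. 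Thus $\hat{G}$ is a tree, and identifying the combinatorial graph recorded by Definition \ref{def:1} with $\hat{G}$ shows $T_{\{G_j\}_{j\in J}}$ is a tree; the delicate bookkeeping, and the crux of the argument, is precisely this matching of the two descriptions at vertices incident to several bridge pieces.
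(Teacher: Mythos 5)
First, a point of comparison: the paper states Theorem \ref{thm:2} with no proof at all, treating it as a known fact about block decompositions, so there is no argument of the author's to measure yours against; your route via the cycle-equivalence on edges is the standard construction of the blocks and is surely what is intended. The existence part of your argument (the equivalence relation on edges, splicing for transitivity, maximality, and the claim that two distinct pieces meet in at most one vertex) is essentially sound, modulo the observation that a single-edge piece does not literally satisfy the first bullet of Definition \ref{def:1} (removing its only edge leaves two isolated vertices, which is disconnected); that is a defect of the paper's definition rather than of your construction. Your uniqueness paragraph is also only a sketch --- you assert rather than prove that any family satisfying Definition \ref{def:1} must consist exactly of the cycle-equivalence classes --- but that is repairable.

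The genuine gap is at the step you defer as ``delicate bookkeeping'': the identification of $T_{\{G_j\}_{j\in J}}$ with your quotient graph $\hat G$ is not bookkeeping, it is false. Definition \ref{def:1} declares $T_{\{G_j\}_{j\in J}}$ to be the intersection graph of the pieces, with exactly one edge for each pair of intersecting pieces. If $k\geq 3$ pieces share a single cut vertex, this graph contains a complete graph on $k$ vertices and hence a cycle. The simplest instance is the star with three edges meeting at one vertex: the three pieces are the three edges, they pairwise intersect at the centre, and $T_{\{G_j\}_{j\in J}}$ is a triangle, whereas your $\hat G$ is the star itself, a tree on four vertices. The two descriptions cannot be matched, and with the paper's literal definition the conclusion of Theorem \ref{thm:2} fails already for this example. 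What is true, and presumably intended, is that the block--cut-vertex graph (with a vertex for each piece \emph{and} for each cut vertex, and edges recording incidence) is a tree; proving that would require amending the definition of $T_{\{G_j\}_{j\in J}}$, not finishing your final step as written.
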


\begin{Def}
In Definition \ref{def:1}, if each subgraph $G_j$ is isomorphic to either of the following graph, then the tree decomposition of $G$ is called a {\it simple cactus tree decomposition} or an {\it SCT decomposition}.
\begin{itemize}
\item A graph with exactly one edge and two vertices.
\item A graph with exactly two edges and two vertices which is homeomorphic to $S^1$.
\end{itemize}
\end{Def}
\begin{Cor}
\label{cor:1}
A graph is a tree if and only if it has an SCT decomposition consisting of only graphs with exactly one edge and two vertices.
\end{Cor}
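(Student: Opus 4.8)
The plan is to leverage the uniqueness of the tree decomposition from Theorem \ref{thm:2} and to compute first Betti numbers across the decomposition. By Theorem \ref{thm:2}, every graph $G$ carries a unique tree decomposition $(\{G_j\}_{j \in J}, T_G)$, so an SCT decomposition of $G$ exists precisely when this unique decomposition has every piece $G_j$ isomorphic either to a single edge or to the two-vertex, two-edge graph homeomorphic to $S^1$. Hence the corollary reduces to the single assertion that the unique tree decomposition of $G$ consists only of single edges if and only if $b_1(G) = 0$.

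First I would record the elementary observation that, among the pieces arising in a tree decomposition, a piece $G_j$ is a single edge exactly when $b_1(G_j) = 0$. Indeed, a piece that is not a single edge remains connected after deletion of any one of its edges (the first bullet of Definition \ref{def:1}), so none of its edges is a bridge and it therefore contains a cycle, giving $b_1(G_j) \geq 1$; a single edge plainly has $b_1 = 0$. In particular the circle piece has $b_1 = 1$, so an SCT decomposition consists only of single edges precisely when every $b_1(G_j) = 0$.

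The key step is the additivity $b_1(G) = \sum_{j \in J} b_1(G_j)$, which I would prove topologically from the defining property that $T_G$ is a tree. Since distinct pieces meet in at most a single vertex, any $1$-cycle of $G$ that used edges from two distinct pieces would descend to a nontrivial closed walk among the corresponding vertices of $T_G$, which is impossible in a tree; thus every cycle of $G$ lies inside a single piece, the cycle space of $G$ is the direct sum of the cycle spaces of the $G_j$, and the Betti numbers add. With this in hand both implications are immediate: if $G$ is a tree then $b_1(G) = 0$ forces every $b_1(G_j) = 0$, so by the observation each $G_j$ is a single edge and the decomposition is an SCT decomposition of the desired form; conversely, if the SCT decomposition consists only of single edges then every $b_1(G_j) = 0$, whence $b_1(G) = \sum_{j \in J} b_1(G_j) = 0$ and $G$ is a tree.

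The main obstacle I anticipate is making the additivity step fully rigorous when a single vertex of $G$ is shared by several pieces at once; there the informal passage to a closed walk in $T_G$ must be replaced by careful Euler-characteristic bookkeeping, counting $E = \sum_{j \in J} E_j$ together with the vertex identifications recorded by $T_G$, so as to confirm that the gluings contribute exactly the correction collapsing $\sum_{j \in J} b_1(G_j)$ to $b_1(G)$ without introducing spurious cycles.
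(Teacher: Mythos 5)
The paper states Corollary \ref{cor:1} with no proof at all (it is presented as an immediate consequence of Definition \ref{def:1} and Theorem \ref{thm:2}), so there is no argument of the author's to compare yours against; judged on its own terms, your Betti-number route is sound and is essentially the argument one would want to see written out. Your reduction via the uniqueness in Theorem \ref{thm:2}, and the dichotomy that a piece which is not a single edge has no bridge and hence satisfies $b_1(G_j)\geq 1$, are both correct. The one step you leave open --- additivity of $b_1$ when several pieces share a common vertex --- does not in fact require the Euler-characteristic bookkeeping you anticipate: the clean closure is the standard observation that any cycle $C$ of $G$ is a connected subgraph with no bridge, so $C\cup G_j$ is again connected and bridgeless for any piece $G_j$ meeting $C$ in an edge, and the maximality clause of Definition \ref{def:1} forces $C\subseteq G_j$; since the cycle space of $G$ is spanned by cycles and the pieces have pairwise disjoint edge sets, it is the internal direct sum of the cycle spaces of the $G_j$, giving $b_1(G)=\sum_j b_1(G_j)$ with no correction terms, however many pieces meet at a vertex. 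One caveat you inherit from the paper rather than introduce: read literally, the first bullet of Definition \ref{def:1} is violated by a single-edge piece (deleting its only edge disconnects it), so your observation that ``a piece is a single edge exactly when $b_1(G_j)=0$'' rests on the charitable reading under which the decomposition is the usual one into maximal bridgeless subgraphs together with bridges; under that reading your argument goes through in both directions.
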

\begin{Def}
The {\it Reeb space} $W_c$ of a continuous map $c:X \rightarrow Y$ is defined in the following way. We can define an equivalence relation ${\sim}_c$ on $X$ by the following. Two points satisfy $x_1 {\sim}_c x_2$ if and only if they are in a same component of the preimage $c^{-1}(y)$ of some point $y$ and a continuous map $\bar{c}:W_C \rightarrow Y$ satisfying the relation $c=\bar{c} \circ q_c$ is uniquely defined.
For a smooth map $c:X \rightarrow Y$ on a closed manifold $X$ into a $1$-dimensional manifold $Y$ with no boundary, we have a graph according to \cite{saeki} where the vertex set of the Reeb space $W_c$ of $c$ is the set of all components $v$ with ${q_c}^{-1}(v)$ containing some points of $S(c)$. This is the {\it Reeb graph} of $c$.
\end{Def}
\subsection{Singularity theory of differentiable maps and differential topology and algebraic geometry.}
We explain fundamental singularity theory of differentiable maps and related differential topology, based on \cite{golubitskyguillemin}, for example.

For a real vector space $V$, $\dim V$ denotes its dimension. More generally, we use $\dim X$ for a topological manifold $X$. 

\begin{Def}
Two smooth submanifolds $X_1$ and $X_2$ with no boundaries in a smooth manifold $X$ with no boundary intersect in the {\it transversal way} in a subset $A \subset X$
if for each point $x \in X_1 \bigcap X_2 \subset A$, for the dimension $\dim (T_x X_1 \bigcap T_x X_2)$ of the vector space, we have $\dim (T_x X_1 \bigcap T_x X_2)=\dim T_x X_1+\dim T_x X_2-\dim T_x X$.
For example, in the case $\dim X_1+\dim X_2=\dim X$ with $X_1$ and $X_2$ being compact, the intersection $X_1 \bigcap X_2$ is a discrete subset of $X$. We consider this case mainly.
\end{Def}

The following is a kind of fundamental theorems in reconstructing real algebraic maps onto the closure of a given non-empty open set in a real affine space, first presented in \cite{kitazawa3}. This also extends an essential part in \cite{kitazawa2}, in which the case of manifolds $S_j$ being mutually disjoint is investigated. For \cite{kitazawa2}, see also \cite{kitazawa4}.
\begin{Thm}[\cite{kitazawa3, kitazawa5}]
\label{thm:3}
Let $D \subset {\mathbb{R}}^2$ be an open set which is non-empty satisfying the following.
\begin{itemize}
\item Let $\overline{D}$ be the closure of $D$ considered in ${\mathbb{R}}^2$. The set $\overline{D}-D$ is a subset of the union ${\bigcup}_{j=1}^{l} S_j$ of real algebraic manifolds $S_j$ {\rm (}$1 \leq j \leq l${\rm )} of dimension $1$ labeled by $l>0$ integers $j$.   
\item Two distinct real algebraic manifolds $S_{j_1} \subset {\mathbb{R}}^2$ and $S_{j_2} \subset  {\mathbb{R}}^2$ intersect in the {\it transversal way} in the subset $\overline{D} \subset {\mathbb{R}}^2$.
\item The intersection of three distinct real algebraic manifolds $S_{j_1}, S_{j_2}, S_{j_3} \subset  {\mathbb{R}}^2$ is always empty.
\item Let $S_j$ be a union of connected components of the zero set of a real polynomial function $f_j$. We have $S_j \bigcap D= D \bigcap \{x \mid f_j(x)=0\}$ and $D=\{x \mid f_j(x)>0\}$. The set $S_j \bigcap \overline{D}$ is non-empty.
\item For each integer $1 \leq j \leq l$, an integer $1 \leq m_{l}(j)=j^{\prime} \leq  l^{\prime}$ smaller than or equal to a (suitable) positive integer $L^{\prime}$ is assigned according to the rule that if $S_{j_1} \bigcap S_{j_2} \bigcap \overline{D}$ is non-empty, then $m_{l}(j_1)$ and $m_{l}(j_2)$ is not equal and that to any integer ${j_0}^{\prime}$ satisfying $1 \leq {j_0}^{\prime} \leq l^{\prime}$, we can assign at least one integer $j_0$ satisfying $m_{l_1}(j_0)={j_0}^{\prime}$. A non-negative integer $I_{j^{\prime}}$ is also defined for each integer $1 \leq j^{\prime} \leq l^{\prime}$. 
\end{itemize}
In this situation, the zero set $M:=\{(x,\{y_{I,j^{\prime}}\}_{j^{\prime}=1}^{l^{\prime}}) \in {\mathbb{R}}^2 \times {\prod}_{j^{\prime}=1}^{l^{\prime}} {\mathbb{R}}^{I_{j^{\prime}}+1}={\mathbb{R}}^{{\Sigma}_{j^{\prime}=1}^{l^{\prime}} (I_{j^{\prime}})+l^{\prime}+2} \mid {\prod}_{j_I \in \{j \mid m_{l_1}(j)=j^{\prime}\}} (f_{j_I}(x))-||y_{I,j^{\prime}}||^2=0, 1 \leq j^{\prime} \leq l^{\prime}\}$ is defined as a real algebraic manifold of dimension ${\Sigma}_{j^{\prime}=1}^{l^{\prime}} (I_{j^{\prime}})+l^{\prime}+2$. The restriction of the canonical projection ${\pi}_{{\Sigma}_{j^{\prime}=1}^{l^{\prime}} (I_{j^{\prime}})+l^{\prime}+2,2}$ to $M$ is defined as a real algebraic map onto $\overline{D} \subset {\mathbb{R}}^2$.\end{Thm}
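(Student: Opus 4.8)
The plan is to exhibit $M$ as the zero set of an explicit polynomial map having $0$ as a regular value, and then to read both assertions off the implicit function theorem, which is precisely what the paper's definition of a real algebraic manifold and of a real algebraic map demands. Write $N:=\sum_{j'=1}^{l'}(I_{j'})+l'+2$ for the dimension of the ambient affine space and define the polynomial map $F=(F_1,\dots,F_{l'})\colon{\mathbb{R}}^{N}\rightarrow{\mathbb{R}}^{l'}$ by
\[
F_{j'}\bigl(x,\{y_{I,j'}\}\bigr):=\prod_{m_{l}(j)=j'}f_{j}(x)-\|y_{I,j'}\|^{2},\qquad 1\le j'\le l',
\]
so that $M=F^{-1}(0)$. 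First I would note $M\neq\emptyset$: for any $x\in D$ every factor $f_{j}(x)$ is positive, hence each product is positive and one may place each $y_{I,j'}$ on the sphere of the appropriate radius. The whole theorem then reduces to the single claim that the derivative $DF$ has rank $l'$ at every point of $M$; granting this, the implicit function theorem exhibits $M$ as a non-singular real algebraic manifold of codimension $l'$ in ${\mathbb{R}}^{N}$, and the canonical projection ${\pi}_{N,2}$ restricts to a real algebraic map in the sense of the paper.

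For the rank computation I would exploit the block structure of $DF$ in the $y$-variables. The coordinates of $y_{I,j'}$ occur only in $F_{j'}$, and ${\partial F_{j'}}/{\partial y_{I,j'}}=-2\,y_{I,j'}$; hence in the columns belonging to $y_{I,j'}$ only the $j'$-th row is nonzero, where it equals $-2\,y_{I,j'}$. Consequently every index $j'$ with $y_{I,j'}\neq0$ contributes a row that is linearly independent of all the others, the independence being witnessed inside its private $y$-block. Thus the only way the $l'$ rows could fail to be independent is through the indices in $A:=\{j'\mid y_{I,j'}=0\}$.

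For $j'\in A$ the defining equation forces $\prod_{m_{l}(j)=j'}f_{j}(x)=0$, so $x\in S_{j_{0}}$ for some $j_{0}$ with $m_{l}(j_{0})=j'$. The labelling hypothesis makes any two manifolds carrying the same label disjoint on $\overline{D}$, so this $j_{0}$ is unique; differentiating the product therefore leaves a single surviving term and yields
\[
\nabla_{x}F_{j'}=\Bigl(\prod_{k\neq j_{0}}f_{k}(x)\Bigr)\nabla f_{j_{0}}(x),
\]
a nonzero multiple of $\nabla f_{j_{0}}(x)$ since $S_{j_{0}}$ is non-singular. The curves $\{S_{j_{0}}\mid j'\in A\}$ are pairwise distinct and all pass through $x$, so the empty-triple-intersection hypothesis gives $|A|\le2$, while transversality in ${\mathbb{R}}^{2}$ makes the corresponding at most two gradients linearly independent. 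A short elimination finishes the argument: in a relation $\sum_{j'}\lambda_{j'}(\text{$j'$-th row})=0$ the private $y$-blocks first force $\lambda_{j'}=0$ for $j'\notin A$, after which the independent gradients force $\lambda_{j'}=0$ for $j'\in A$. Hence $DF$ has rank $l'$ on all of $M$.

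Finally, for the image I would use that each $f_{j}$ is non-negative on $\overline{D}$ by continuity, so every product $\prod_{m_{l}(j)=j'}f_{j}(x)$ is non-negative there; placing each $y_{I,j'}$ on the sphere of that squared radius produces a point of $M$ over $x$, whence ${\pi}_{N,2}|_{M}$ maps onto $\overline{D}$, with fibre $\prod_{j'}S^{I_{j'}}$ over interior points of $D$ and with the spheres degenerating over $\overline{D}-D$. The one genuinely delicate point is the simultaneous-degeneration case $|A|=2$: away from it the $y$-block structure makes the rank transparent, but when two of the spheres collapse at a single point the rank is governed entirely by the two normal directions at a crossing of $\partial D$, and this is exactly where the transversality and empty-triple-intersection hypotheses are indispensable.
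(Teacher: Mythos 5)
Your argument is correct in substance, but it is not the route the paper presents here: you prove non-singularity globally by exhibiting $M$ as $F^{-1}(0)$ for $F=(F_1,\dots,F_{l'})$ and checking that $DF$ has rank $l'$ on $M$ via the block structure in the $y$-variables, the uniqueness of the vanishing factor inside each label class, and the transversality plus empty-triple-intersection hypotheses. This is essentially the original implicit-function-theorem proof the paper attributes to \cite{kitazawa3} and explicitly declines to reproduce; what the paper gives instead is a stratified description of the local models of ${\pi}_{N,2}{\mid}_M$ over the three strata of $\overline{D}$ (a trivial $\prod S^{I_{j'}}$-bundle over $D$, a product with one ``natural height'' Morse function of a disk over the smooth part of $\overline{D}-D$, and a product with two such functions over the crossings). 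Your computation certifies the manifold claim more directly and transparently; the paper's local models carry extra information (the diffeomorphism types of the fibers and of the map near the boundary) that is what actually gets used afterwards in the proof of Theorem \ref{thm:1} to read off the Reeb graph. One caveat you should make explicit: your rank argument uses that the base point $x$ lies in $\overline{D}$ --- both to conclude that at most one factor $f_{j_0}$ with label $j'$ vanishes (same-label curves are only known to be disjoint \emph{inside} $\overline{D}$) and to conclude $\nabla f_{j_0}(x)\neq 0$ (non-singularity is only guaranteed on the distinguished components $S_{j_0}$). Since the literal zero set of $F$ may a priori contain points over ${\mathbb{R}}^2-\overline{D}$ (a product of factors can be non-negative with two negative factors), you should either add an argument excluding such points or, consistently with the paper's definition of a real algebraic manifold as a union of connected components of a zero set, take $M$ to be the union of the components lying over $\overline{D}$; your surjectivity paragraph likewise only establishes $\overline{D}\subseteq {\pi}_{N,2}(M)$, not the reverse inclusion. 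The same point is left implicit in the paper's own exposition, so this is a refinement rather than a defect peculiar to your proof.
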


Hereafter, $\mathbb{N} \subset \mathbb{R}$ denotes the set of all positive integers and ${\mathbb{N}}_{t} \subset \mathbb{N}$ denotes the set of all positive integers smaller than a given real number $t$.
Hereafter, it is fundamental and important to know that the function ${\pi}_{m+1,1} {\mid}_{S^m}$ is a Morse function with exactly two singular points. The restriction of this function to an $m$-dimensional (sub)manifold $X \subset S^m$ containing singular points of the original function in the interior is referred to as a {\it function for the natural height} of $X$.  

\begin{proof}[Reviewing our proofs of Theorem \ref{thm:3} in a self-contained way]
Originally, our proof is presented in a way respecting the implicit function theorem (\cite{kitazawa3}). We discuss our proof in another way, which is essentially same as that of our original discussion and is also discussed in \cite[Theorem 1]{kitazawa5}.

We assume fundamental arguments in real algebraic situations, discussed in \cite{kollar}, for example.

We consider local preimages of each point $p \in \overline{D}$ for the resulting map on $M$. Our main purpose is to see that $M$ is non-singular and a real algebraic manifold. \\
\ \\
Case 1.\ The case $p$ is in the interior $D$. \\
The preimage of a small open neighborhood diffeomorphic to $D^2$ is diffeomorphic to $D^2 \times {\prod}_{j^{\prime} \in {\mathbb{N}}_{l^{\prime}}} S^{j^{\prime}}$. The map gives a trivial smooth bundle. \\
\ \\
Case 2.\ The case $p$ is in $\overline{D}-D$ and contained in exactly one manifold $S_j$ of the family $\{S_j\}$. \\
The preimage of a small open neighborhood of $p$ diffeomorphic to $D^2$ is diffeomorphic to $D^1 \times D^{I_{j^{\prime}}+1} \times {\prod}_{j^{\prime \prime} \in {\mathbb{N}}_{l^{\prime}}-\{j^{\prime}\}} S^{I_{j^{\prime \prime}}}$. The map gives the product map of a Morse function for the natural height of the disk and the identity map on the product of a copy of $D^1$ and finitely many copies of the unit spheres.\\
\ \\
Case 3.\ The case $p$ is in $\overline{D}-D$ and contained in exactly two manifolds $S_{j_1}$ and $S_{j_2}$ of the family $\{S_j\}$. \\
The preimage of a small open neighborhood of $p$ diffeomorphic to $D^{I_{{j_1}^{\prime}}+1} \times D^{I_{{j_2}^{\prime}}+1} \times {\prod}_{j^{\prime \prime} \in {\mathbb{N}}_{l^{\prime}}-\{{j_1}^{\prime},{j_2}^{\prime}\}} S^{I_{j^{\prime \prime}}}$. The map gives the product map of two Morse functions for the natural height of two disks and the identity map on the product of finitely many copies of the unit spheres.\\

We can check that $M$ is a real algebraic manifold.

This completes our proof.
\end{proof}
\subsection{Proving Theorem \ref{thm:1} and another new result, Theorem \ref{thm:4}.}
The {\it straight line} in ${\mathbb{R}}^2$ means the zero set of a real polynomial of degree $1$. The {\it circle} ({\it centered at $p$} and {\it of a radius $r>0$}) means the real algebraic manifold $\{x \in {\mathbb{R}}^2 \mid ||x-p||=r\}$, diffeomorphic to $S^1$. 
We also assume fundamental knowledge on geometry of elementary figures in the Euclidean plane ${\mathbb{R}}^2$.  
\begin{proof}[A proof of Theorem \ref{thm:1}.]
A tree $G_T$ admits a piecewise smooth function $g_{G_T}:G_T \rightarrow \mathbb{R}$ with the following properties. This is a kind of well-known theorems on graphs.
\begin{itemize}
\item The restriction of $g_{G_T}$ to each edge of $G_T$ is a smooth embedding.
\item There exists a piecewise smooth embedding $\tilde{g_{G_T}}:G_T \rightarrow {\mathbb{R}}^2$ with $g_{G_T}={\pi}_{2,1} \circ \tilde{g_{G_T}}$.
\item Let $\{i_{g_{G_T},j}\}_{j=1}^{I_{g_{G_T}}}$ denote the image of the restriction of the function to the vertex set of $G_T$ where $I_{g_{G_T}}$ denotes a positive integer. We choose $i_{g_{G_T},a}<p_a<i_{g_{G_T},a+1}$. The size of the preimage ${g_{G_T}}^{-1}(p_{1})$ is $1$. The size of the preimage ${g_{G_T}}^{-1}(p_{j+1})$ is equal to that of the preimage ${g_{G_T}}^{-1}(p_{j})$ or the sum of the size of the preimage ${g_{G_T}}^{-1}(p_{j})$ and a suitable positive integer. 

Note that the size of the preimage ${g_{G_T}}^{-1}(p_{a})$ is the number of the edges of $G_T$ mapped onto $\{p \mid i_{g_{G_T},a} < p < i_{g_{G_T},a+1}\}$ and let $\{e_{g_{G_T},a,j}\}_{j=1}^{I_{g_{G_T},a}}$ denote the set of all such edges of $G_T$.
\end{itemize} 
FIGURE \ref{fig:1} visually presents the function $g_{G_T}:G_T \rightarrow \mathbb{R}$ and the embedding $\tilde{g_{G_T}}:G_T \rightarrow {\mathbb{R}}^2$ without the related notation.
\begin{figure}
		\includegraphics[width=70mm,height=25mm]{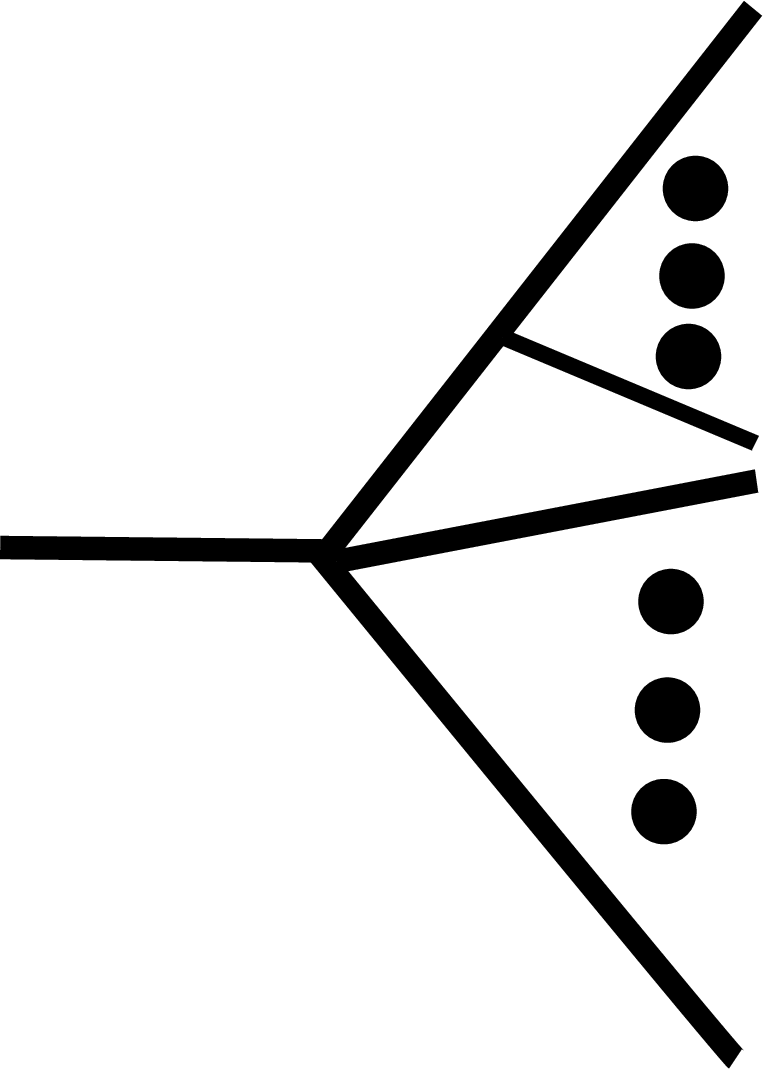}
		\caption{A piecewise smooth function $g_{G_T}:G_T \rightarrow \mathbb{R}$ (a piecewise smooth embedding $\tilde{g_{G_T}}:G_T \rightarrow {\mathbb{R}}^2$).}
                      \label{fig:1}

		\end{figure}

By choosing the values $i_{g_{G_T},j}$ and real numbers $Y_1<Y_2$ suitably, we can choose the following straight lines and circles of fixed radii surrounding the graph $\tilde{g_{G_T}}(G_T)$ inside the open region $D \subset {\mathbb{R}}^2$ formed by them and additional data for Theorem \ref{thm:3}. We abuse the notation from Theorem \ref{thm:3} and scenes around this.
\begin{itemize}
\item We define $S_1:=\{(i_{g_{G_T},1}-{\epsilon}_1,t)  \mid t \in \mathbb{R}\}$ for a suitably chosen real number ${\epsilon}_1>0$.
\item We define $S_2:=\{(i_{g_{G_T},I_{g_{G_T}}}+{\epsilon}_{I_{g_{G_T}}},t)  \mid t \in \mathbb{R}\}$ for a suitably chosen real number ${\epsilon}_{I_{g_{G_T}}}>0$.
\item We define $S_3:=\{(t,Y_1) \mid t \in \mathbb{R}\}$.
\item We define $S_4:=\{(t,Y_2) \mid t \in \mathbb{R}\}$.
\item We define $S_{4+j}:=\{||x-p_{S_2,j}||=r_j \mid x \in {\mathbb{R}}^2\}$ is a circle of a fixed radius $r_j>0$ which is centered at a point $p_{S_2,j} \in S_2$ and which does not intersect $S_1$, $S_3
$ or $S_4$, for each integer $1 \leq j \leq I_{g_{G_T},I_{g_{G_T}}-1} -1$: moreover, distinct circles $S_{4+j}$ are disjoint and the disks obtained as the disks whose boundaries are the circles are mutually disjoint.
\item Remaining $l-(I_{g_{G_T},I_{g_{G_T}}-1}+3)$ circles $S_{I_{g_{G_T},I_{g_{G_T}}-1}+3+j}$ ($1 \leq j \leq l_1-(I_{g_{G_T},I_{g_{G_T}}-1}+3)$) are added satisfying the following rules.
\begin{itemize}
\item These remaining circles are circles of fixed radii and mutually disjoint and the disks obtained as the disks whose boundaries are the circles are also mutually disjoint. The intersection of each circle $S_{I_{g_{G_T},I_{g_{G_T}}-1}+3+j}$ ($1 \leq j \leq l-(I_{g_{G_T},I_{g_{G_T}}-1}+3)$) and ${\bigcup}_{j=1}^{I_{g_{G_T},I_{g_{G_T}}-1}+3} S_j$ is a discrete set consisting of two points.
\item For each edge $e_{g_{G_T},a,j}$ and the image $\tilde{g_{G_T}}(e_{g_{G_T},a,j})$, we can choose a suitable small number ${\epsilon}_a>0$ and have  the connected component  $D_{e_{g_{G_T},a,j}}$ 
of the intersection of the preimage of the interval $\{p \mid i_{g_{G_T},a}+{\epsilon}_a<p<i_{g_{G_T},a+1}\}$ by the projection ${\pi}_{2,1}$ and $D$ containing $e_{g_{G_T},a,j}$ uniquely. Furthermore, the closure $\overline{D_{e_{g_{G_T},a,j}}}$ of the connected component considered in ${\mathbb{R}}^2$ and at most one circle $S_{I_{g_{G_T},I_{g_{G_T}}-1}+3+j}$ ($1 \leq j \leq l-(I_{g_{G_T},I_{g_{G_T}}-1}+3)$) intersect and for such a circle $S_{I_{g_{G_T},I_{g_{G_T}}-1}+3+j}$, the set $\overline{D_{e_{g_{G_T},a,j}}} \bigcap S_{I_{g_{G_T},I_{g_{G_T}}-1}+3+j}$ is diffeomorphic to $D^1$ and mapped onto $\{p \mid i_{g_{G_T},a} \leq p \leq i_{g_{G_T},a+1}\}$ by a diffeomorphism. Furthermore, the following are satisfied.
\begin{itemize}
\item The set $\overline{D_{e_{g_{G_T},a,j}}} \bigcap ({\bigcup}_{j=1}^{l_1} S_j) \bigcap {{\pi}_{2,1}}^{-1}(i_{g_{G_T},a}+{\epsilon}_a)$ consists of two points. 
\item The circle $S_{I_{g_{G_T},I_{g_{G_T}}-1}+3+j}$ ($1 \leq j \leq l-(I_{g_{G_T},I_{g_{G_T}}-1}+3)$) can not be found as the argument before if two distinct circles $S_{4+j}$ ($1 \leq j \leq I_{g_{G_T},I_{g_{G_T}}-1} -1$) of a same radius $r_j$ containing points of $\overline{D_{e_{g_{G_T},a,j}}} \bigcap ({\bigcup}_{j=1}^{l} S_j) \bigcap {{\pi}_{2,1}}^{-1}(i_{g_{G_T},a}+{\epsilon}_a)$ exist. 
\item Each circle $S_{I_{g_{G_T},I_{g_{G_T}}-1}+3+j}$ ($1 \leq j \leq l-(I_{g_{G_T},I_{g_{G_T}}-1}+3)$) is chosen as before for at most one edge $e_{g_{G_T},a,j}$ of $G_T$.
\item For the edge $e_{g_{G,T},1,1}$, we can choose a circle from the family of circles $S_{I_{g_{G_T},I_{g_{G_T}}-1}+3+j}$ ($1 \leq j \leq l-(I_{g_{G_T},I_{g_{G_T}}-1}+3)$) as before.
\item For a vertex of degree $2$, for at least one edge containing the vertex, we can choose a circle from the family of circles $S_{I_{g_{G_T},I_{g_{G_T}}-1}+3+j}$ ($1 \leq j \leq l-(I_{g_{G_T},I_{g_{G_T}}-1}+3)$) as before. This is for the Reeb graph of a desired function to have its desired vertex set and the function to have the singular set of our desired type.
\end{itemize}  
\end{itemize}
\item We put $m_{l}(j_1)=1$ for $1 \leq j_1 \leq 2$, $m_{l}(j_2)=2$ for $3 \leq j_2 \leq 4$, $m_{l}(j_3)=3$ for $5 \leq j_3 \leq I_{g_{G_T},I_{g_{G_T}}-1}+3$, and $m_{l}(j_4)=4$ for the remaining integers $j_4 \leq l_1$. We also choose $I_1, I_2, I_3, I_4>0$.
\end{itemize}
For this, see also FIGURE \ref{fig:2}.
\begin{figure}
		\includegraphics[width=70mm,height=25mm]{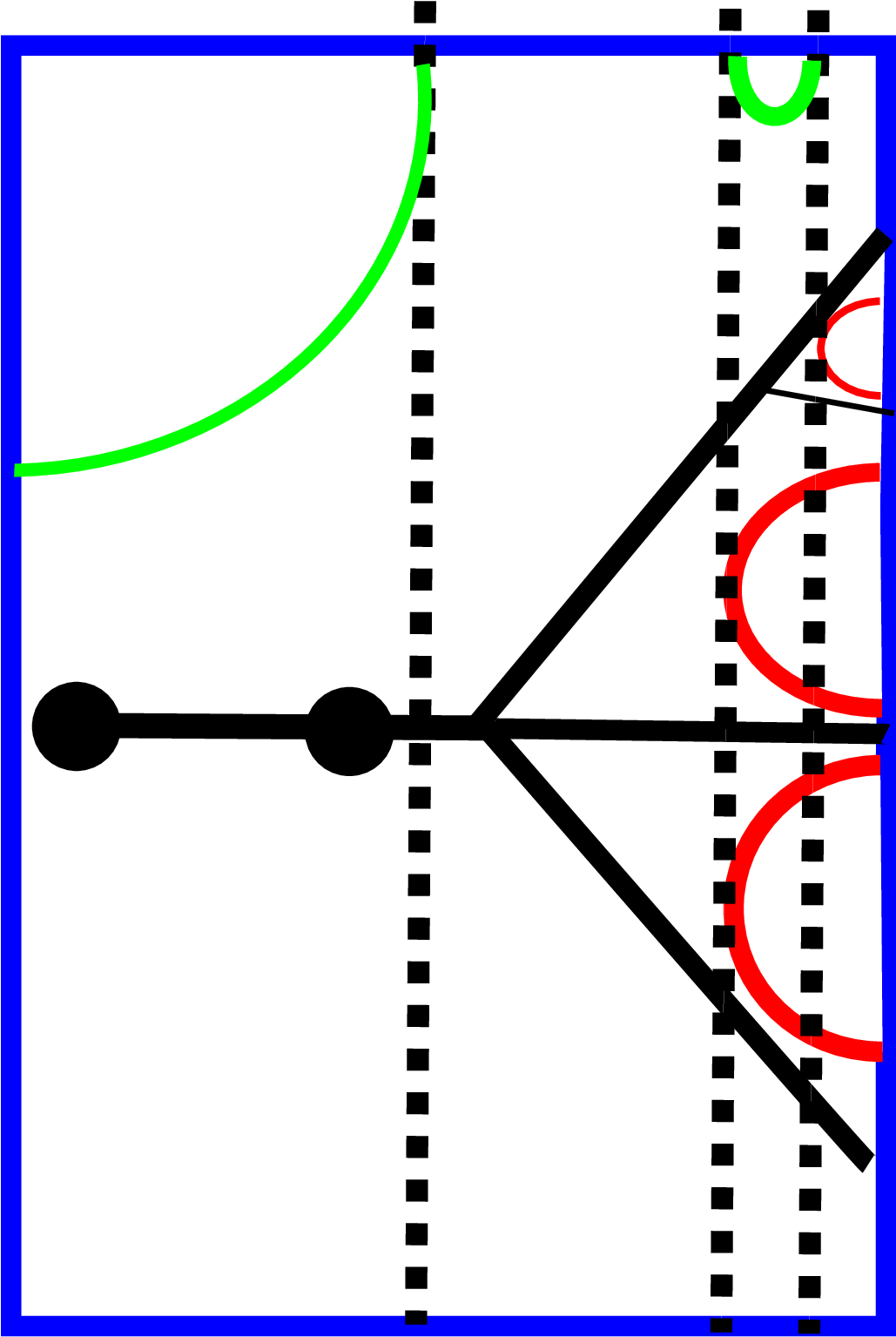}
		\caption{We present a case with $l=9$ , $l^{\prime}=3$, and $I_{g_{G_T}}=5$.
The lines $S_j$ are colored in blue ($1 \leq j \leq 4$). We have $I_{g_{G_T},I_{g_{G_T}}-1}=I_{g_{G_T},4}=4$.
The circles $S_{4+j}$ are colored in red ($1 \leq j \leq I_{g_{G_T},I_{g_{G_T}}-1}-1=4-1=3$). The circles $S_{I_{g_{G_T},I_{g_{G_T}}-1}+3+j}$ ($1 \leq j \leq l-(I_{g_{G_T},I_{g_{G_T}}-1}+3)=9-(4+3)=9-7=2$) are colored in green.}
\label{fig:2}
		\end{figure}
By Theorem \ref{thm:3} with a fundamental argument on the singularity theory, explained in \cite{golubitskyguillemin} for example, and a fundamental argument on real algebraic geometry, explained in \cite{bochnakcosteroy, kollar} (, especially in \cite[Discussion 14]{kollar}), we have a desired real algebraic Morse-Bott function. To know more precise exposition, consult also the preprint \cite{kitazawa3, kitazawa5, kitazawa6}.

This completes the proof.
\end{proof}
\begin{Thm}
\label{thm:4}
Let $G$ be either of the following graphs.
\begin{itemize}
\item A graph $G_0$ having an SCT decomposition $T_{G_0}$ satisfying the following.
\begin{itemize}
\item Hereafter, we abuse the notation in the proof of Theorem \ref{thm:1}. There exists a function $g_{G_T}:G_T \rightarrow \mathbb{R}$ as in the proof.
\item The vertex set of $T_{G_0}$ corresponds to the edge set of $G_T$ and we respect the correspondence. Two distinct vertices of $T_{G_0}$ are connected by the closure of an edge of the graph $T_{G_0}$ if and only if the closures of the edges of $G_T$ intersect in a one-point set.
\item For an edge of $G_T$ we can choose the circle $S_{i_{g_{G_T},I_{g_{G_T}}-1}+3+j}$ {\rm (}$1 \leq j \leq l_1-(i_{g_{G_T},I_{g_{G_T}}-1}+3)${\rm )} in the proof, the corresponding vertex of $T_{G_0}$ is a graph with exactly one edge.
\item  For an edge of $G_T$ we cannot choose the circle $S_{i_{g_{G_T},I_{g_{G_T}}-1}+3+j}$ {\rm (}$1 \leq j \leq l_1-(i_{g_{G_T},I_{g_{G_T}}-1}+3)${\rm  )}in the proof, the corresponding vertex of $T_{G_0}$ is a graph homeomorphic to $S^1$.
\end{itemize}  
\item A graph $G$ obtained from the graph $G_0$, defined above, in the following way.
\begin{itemize}
\item Choose mutually disjoint sets of two or three edges of the graph $G_T$ for which we can choose the circles $S_{i_{g_{G_T},I_{g_{G_T}}-1}+3+j}$ {\rm (}$1 \leq j \leq l-(i_{g_{G_T},I_{g_{G_T}}-1}+3)${\rm )} as in the proof of Theorem \ref{thm:1}. Choose each set as a set with either of the following forms.
\begin{itemize}
\item $\{e_{g_{G_T},1,1},e_{g_{G_T},2,j}\}$.
\item $\{e_{g_{G_T},I_{g_{G_T}}-2,j_1},e_{g_{G_T},I_{g_{G_T}}-1,j_2}\}$.
\item $\{e_{g_{G_T},a,j_1},e_{g_{G_T},a+1,j_2},e_{g_{G_T},a+2,j_3}\}$.
\end{itemize}
\item We do the following for each of the mutually disjoint sets before and change the tree $T_{G_0}$ into another tree $T_G$. The new graph $G$ is a graph which has an SCT decomposition $T_G$.
\begin{itemize}
\item For the set of the form $\{e_{g_{G_T},1,1},e_{g_{G_T},2,j}\}$, we change the vertex of $T_{G_0}$ corresponding to $e_{g_{G_T},1,1}$ into a graph homeomorphic to $S^1$.
\item For the set of the form $\{e_{g_{G_T},I_{g_{G_T}}-2,j_1},e_{g_{G_T},I_{g_{G_T}}-1,j_2}\}$, we change the vertex of $T_{G_0}$ corresponding to $e_{g_{G_T},I_{g_{G_T}}-1,j_2}$ into a graph homeomorphic to $S^1$.
\item For the set of the form $\{e_{g_{G_T},a,j_1},e_{g_{G_T},a+1,j_2},e_{g_{G_T},a+2,j_3}\}$, we change the vertex of $T_{G_0}$ corresponding to $e_{g_{G_T},a+1,j_2}$ into a graph homeomorphic to $S^1$.
\end{itemize}
\end{itemize}
\end{itemize}
We have a real algebraic function represented as the composition of a real algebraic map as in Theorem \ref{thm:3} with the canonical projection ${\pi}_{2,1}$ satisfying the following conditions.
\begin{itemize}
\item The polynomials $f_j$ are of 1 or 2 degree and each $S_j$ is a circle of a fixed radius or a straight line in ${\mathbb{R}}^2$.
\item The function is a Morse-Bott function.
\item The Reeb graph is isomorphic to $G$.
\end{itemize}
\end{Thm}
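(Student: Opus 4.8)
The plan is to build directly on the construction in the proof of Theorem \ref{thm:1}, since the graphs $G_0$ and $G$ are defined precisely through that construction and its green circles (those $S_j$ with $m_l(j)=4$). The governing principle I would establish and then exploit is a dictionary between the planar arrangement and the SCT decomposition of the resulting Reeb graph: for an edge $e_{g_{G_T},a,j}$ of $G_T$ along which a green circle is placed, the local behaviour of the function contributes a single edge to the Reeb graph, whereas for an edge along which no green circle is placed, the relevant fiber component splits and re-merges across the strip $\{i_{g_{G_T},a}<x<i_{g_{G_T},a+1}\}$, contributing a subgraph homeomorphic to $S^1$. Granting this dictionary, the graph $G_0$ is realized by the very same real algebraic Morse-Bott function furnished by Theorem \ref{thm:1} via Theorem \ref{thm:3}, because the SCT decomposition $T_{G_0}$ is by definition read off from exactly this green-circle data.

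First I would fix the tree $G_T$, the piecewise smooth function $g_{G_T}$, the embedding $\tilde{g_{G_T}}$, the bounding lines $S_1,\dots,S_4$, the red circles, and the admissible green circles, all exactly as in the proof of Theorem \ref{thm:1}. Next I would carry out the local fiber analysis of the three cases in the proof of Theorem \ref{thm:3} along each strand $\tilde{g_{G_T}}(e_{g_{G_T},a,j})$ as the height coordinate $\pi_{2,1}$ sweeps from $i_{g_{G_T},a}$ to $i_{g_{G_T},a+1}$; this is where the dictionary is verified, by tracking whether the component of the preimage over the strand persists as a single arc (green circle present) or is born and dies so as to close up into a loop (green circle absent). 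Applying Theorem \ref{thm:3} to this arrangement then yields a real algebraic Morse-Bott function whose Reeb graph carries the prescribed SCT decomposition $T_{G_0}$, proving the first case.

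For the graph $G$, I would start from the $G_0$ arrangement and, for each of the chosen mutually disjoint sets of two or three consecutive edges, delete the green circle from the single designated edge: the first edge for a set $\{e_{g_{G_T},1,1},e_{g_{G_T},2,j}\}$, the last for a set $\{e_{g_{G_T},I_{g_{G_T}}-2,j_1},e_{g_{G_T},I_{g_{G_T}}-1,j_2}\}$, and the middle for a set $\{e_{g_{G_T},a,j_1},e_{g_{G_T},a+1,j_2},e_{g_{G_T},a+2,j_3}\}$. By the dictionary this converts exactly the prescribed vertex of $T_{G_0}$ into a vertex homeomorphic to $S^1$, turning $T_{G_0}$ into $T_G$; crucially, each designated edge borders edges that retain their green circles, which together with the surrounding structure supply the birth and the death that close the loop. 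At each modification I would re-check the hypotheses of Theorem \ref{thm:3}: pairwise transversality of the curves along $\overline{D}$, the absence of triple intersections, the admissibility of the colouring $m_l$ after deletion, and the non-emptiness of each $S_j\bigcap\overline{D}$. Since the chosen edge-sets are mutually disjoint and each modification is confined to its own strip, these verifications decouple and reduce to the elementary planar checks already used for $G_0$.

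The main obstacle is the faithful establishment of the dictionary in the second step, that is, proving that the absence of a green circle along a bordered strand genuinely produces a subgraph homeomorphic to $S^1$ in the Reeb graph, and not merely an extra vertex or some different local feature, and conversely that its presence yields a single edge. This rests on a careful count of the connected components of the preimages in Cases 2 and 3 of the proof of Theorem \ref{thm:3} and on confirming that the birth and the death of the relevant fiber component are correctly matched across the two values $i_{g_{G_T},a}$ and $i_{g_{G_T},a+1}$ bounding the strip. Once this local analysis is secured, both the realization of $G_0$ and the three modification patterns producing $G$ follow by the same reasoning, leaving only the routine re-verification of the transversality and colouring hypotheses of Theorem \ref{thm:3}.
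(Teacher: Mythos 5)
Your combinatorial skeleton is the right one --- the dictionary ``green circle present $\Rightarrow$ one edge, green circle absent $\Rightarrow$ a subgraph homeomorphic to $S^1$,'' and, for the second case, deleting the green circle attached to the designated edge of each chosen set --- and this matches what the paper does for the passage from $G_0$ to $G$. But there is a genuine gap at the foundation: you assert that $G_0$ is realized by ``the very same real algebraic Morse--Bott function furnished by Theorem \ref{thm:1},'' and that function cannot work. By Theorem \ref{thm:1} its Reeb graph is the tree $G_T$, whereas $G_0$ is in general not a tree (it contains an $S^1$ for every edge of $G_T$ that admits no green circle). Equivalently, the dictionary you propose to verify in your second step is simply false for that function: with the choice $I_4>0$ made in the proof of Theorem \ref{thm:1}, the sphere factor $S^{I_4}$ sitting over the class of the green circles is connected, so the preimage component over a strand with no green circle does \emph{not} ``split and re-merge''; it stays connected and contributes a single edge. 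You flag establishing the dictionary as your main obstacle, but no amount of local fiber analysis of Cases 1--3 of Theorem \ref{thm:3} will establish it for the unmodified construction.

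The missing idea, which is the one essential ingredient of the paper's proof, is to rerun the construction of Theorem \ref{thm:1} with $I_4=0$. Then the factor attached to the green-circle class becomes $S^0$, i.e.\ two points $y_4=\pm\sqrt{\prod f_{j_I}(x)}$, so the preimage over each point of $D$ carries two sheets that are identified exactly over the green circles. A strand $\overline{D_{e_{g_{G_T},a,j}}}$ whose boundary contains the arc of a green circle spanning the strip therefore has connected vertical-slice preimages (one Reeb edge), while a strand meeting no green circle keeps its two sheets separated across the whole strip and closes up into an $S^1$ at the adjacent vertices. This is precisely what makes your dictionary true, and it is also what makes your second-case move (omitting the green circles of the designated edges $e_{g_{G_T},1,1}$, $e_{g_{G_T},I_{g_{G_T}}-1,j_2}$, $e_{g_{G_T},a+1,j_2}$) convert those vertices of $T_{G_0}$ into circles rather than merely removing a degree-two vertex. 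With $I_4=0$ inserted, the rest of your plan --- the local analysis along each strand and the routine re-verification of the transversality, non-triple-intersection and labelling hypotheses of Theorem \ref{thm:3} after deletion --- goes through as you describe.
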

\begin{proof}
In the proof of Theorem \ref{thm:1}, we put $I_4=0$ instead and consider the function obtained in the same way. 
We can check the first case by investigating the preimages of each point for the obtained function.

We prove the second case. For the edge $e_{g_{G_T},1,1} \in \{e_{g_{G_T},1,1},e_{g_{G_T},2,j}\}$, $e_{g_{G_T},I_{g_{G_T}}-1,j_2} \in \{e_{g_{G_T},I_{g_{G_T}}-2,j_1},e_{g_{G_T},I_{g_{G_T}}-1,j_2}\}$, and $e_{g_{G_T},a+1,j_2} \in \{e_{g_{G_T},a,j_1},e_{g_{G_T},a+1,j_2},e_{g_{G_T},a+2,j_3}\}$ of $G_T$, mentioned in the end of the statement, we choose the corresponding circle $S_{i_{g_{G_T},I_{g_{G_T}}-1}+3+j}$ {\rm (}$1 \leq j \leq l-(i_{g_{G_T},I_{g_{G_T}}-1}+3)${\rm )} chosen for the edge of $G_T$, in the proof of Theorem \ref{thm:1}. We omit these circles in applying Theorem \ref{thm:3} in Theorem \ref{thm:1}. After this argument, we have a desired function.

This completes the proof. 
\end{proof}
\section{Conflict of interest and data.}
The author is a researcher at Osaka Central
Advanced Mathematical Institute (OCAMI researcher). This is also supported by MEXT Promotion of Distinctive Joint Research Center Program JPMXP0723833165. The author is not employed in the institution. However, the author thanks this. \\

No other data are associated to the present paper.


\begin{thebibliography}{25}
	%

\bibitem{bochnakcosteroy} J. Bochnak, M. Coste and M.-F. Roy, \textsl{Real algebraic geometry}, Ergebnisse der Mathematik und ihrer Grenzgebiete (3) [Results in Mathematics and Related Areas (3)], vol. 36, Springer-Verlag, Berlin, 1998. Translated from the 1987 French original; Revised by the authors.
	\bibitem{bodinpopescupampusorea} A. Bodin, P. Popescu-Pampu and M. S. Sorea, \textsl{Poincar\'e-Reeb graphs of real algebraic domains}, Revista Matem\'atica Complutense, https://link.springer.com/article/10.1007/s13163-023-00469-y, 2023, arXiv:2207.06871v2.
\bibitem{bott} R. Bott, \textsl{Nondegenerate critical manifolds}, Ann. of Math. 60 (1954), 248--261.

\bibitem{golubitskyguillemin} M. Golubitsky and V. Guillemin, \textsl{Stable Mappings and Their Singularities}, Graduate Texts in Mathematics (14), Springer-Verlag (1974).
\bibitem{kitazawa1} N. Kitazawa, \textsl{On Reeb graphs induced from smooth functions on $3$-dimensional closed orientable manifolds with finitely many singular values}, Topol. Methods in Nonlinear Anal. Vol. 59 No. 2B, 897--912, arXiv:1902.08841.
\bibitem{kitazawa2} N. Kitazawa, \textsl{Real algebraic functions on closed manifolds whose Reeb graphs are given graphs}, Methods of Functional Analysis and Topology Vol. 28 No. 4 (2022), 302--308, arXiv:2302.02339, 2023.



\bibitem{kitazawa3} N. Kitazawa, \textsl{Reconstructing real algebraic maps locally like moment-maps with prescribed images and compositions with the canonical projections to the $1$-dimensional real affine space}, the title has changed from previous versions, arXiv:2303.10723, 2024.
\bibitem{kitazawa4} N. Kitazawa, \textsl{Some remarks on real algebraic maps which are topologically special generic maps}, submitted to a refereed journal, arXiv:2312.10646. 
\bibitem{kitazawa5} N. Kitazawa, \textsl{Moment-like maps and real algebraic functions with prescribed preimages}, arXiv:2506.17791.
\bibitem{kitazawa6} N. Kitazawa, \textsl{Reconstruction of real algebraic functions into curves with prescribed Reeb graphs}, arXiv:2507.09467.
\bibitem{kollar} J. Koll\'ar, \textsl{Nash's work in algebraic geometry}, Bulletin (New Series) of the American Matematical Society (2) 54, 2017, 307--324.
\bibitem{lellis} Camillo De Lellis, \textsl{The Masterpieces of John Forbes Nash Jr.}, The Abel Prize 2013--2017 (Helge Holden and Ragni Piene, eds.), Springer International Publishing, Cham, 2019, 391--499, https://www.math.ias.edu/delellis/sites/math.ias.edu.delellis/files/Nash\_Abel\_75.pdf, arXiv:1606.02551.
\bibitem{masumotosaeki} Y. Masumoto and O. Saeki, \textsl{A smooth function on a manifold with given Reeb graph}, Kyushu J. Math. 65 (2011), 75--84.
\bibitem{michalak} L. P. Michalak, \textsl{Realization of a graph as the Reeb graph of a Morse function on a manifold}. Topol. Methods in Nonlinear Anal. 52 (2) (2018), 749--762, arXiv:1805.06727.
\bibitem{milnor} J. Milnor, \textsl{Lectures on the h-cobordism theorem}, Math. Notes, Princeton Univ. Press, Princeton, N.J. 1965.
\bibitem{milnorstasheff} J. Milnor and J. Stasheff, \textsl{Characteristic classes}, Annals of Mathematics Studies, No. 76. Princeton, N. J; Princeton University Press (1974).
\bibitem{nash} J. Nash, \textsl{Real algbraic manifolds}, Ann. of Math. (2) 56 (1952), 405--421.
\bibitem{reeb} G. Reeb, \textsl{Sur les points singuliers d\'{}une forme de Pfaff compl\'{e}tement int\`{e}grable ou d\'{}une fonction num\'{e}rique}, Comptes Rendus Hebdomadaires des S\'{e}ances de I\'{}Acad\'{e}mie des Sciences 222 (1946), 847--849.
\bibitem{saeki} O. Saeki, \textsl{Reeb spaces of smooth functions on manifolds}, International Mathematics Research Notices, maa301, Volume 2022, Issue 11, June 2022, 3740--3768, https://doi.org/10.1093/imrn/maa301, arXiv:2006.01689.
\bibitem{sharko} V. Sharko, \textsl{About Kronrod-Reeb graph of a function on a manifold}, Methods of Functional Analysis and
 Topology 12 (2006), 389--396.
\bibitem{steenrod} N. Steenrod, \textsl{The topology of fibre bundles}, Princeton University Press (1951). 
\bibitem{tognoli} A. Tognoli, \textsl{Su una congettura di Nash}, Ann. Scuola Norm. Sup. Pisa (3) 27 (1973), 167--185.

\end{thebibliography}
\end{document}